\newtheorem{theorem}{Theorem}
\newproof{proof}{Proof}
\newcommand{\NN}{\mathbb{N}}
\newcommand{\UU}{\mathcal{U}}
\newcommand{\w}{\omega}
\newcommand{\TTT}{\mathcal{T}}
\newcommand{\VV}{\mathbb{V}}
\renewcommand{\phi}{\varphi}
\newcommand{\conv}{\mathrm{conv}}
\begin{document}

\begin{frontmatter}

\title{A description of the topologies of free topological and free locally convex vector spaces}

\author{S.~Gabriyelyan}
\ead{saak@math.bgu.ac.il}
\address{Department of Mathematics, Ben-Gurion University of the Negev, Beer-Sheva, P.O. 653, Israel}

\begin{abstract}
We give a simple description of the topology of free  topological vector space  $\VV(X)$ and  the topology of the free locally convex space $L(X)$ over a Tychonoff space $X$. The case when $X$ is a pseudocompact space is also considered.
\end{abstract}

\begin{keyword}
free topological vector space \sep free locally convex space \sep universal uniformity

\MSC[2010] 22A05 \sep 43A40 \sep 54H11

\end{keyword}

\end{frontmatter}




One of the most important classes of locally convex spaces is the class of free locally convex spaces introduced by Markov in \cite{Mar}. The {\em  free locally convex space}  $L(X)$ over a Tychonoff space $X$ is a pair consisting of a locally convex space $L(X)$ and  a continuous map $i: X\to L(X)$  such that every  continuous map $f$ from $X$ to a locally convex space  $E$ gives rise to a unique continuous linear operator $\Psi_E(f): L(X) \to E$  with $f=\Psi_E(f) \circ i$. The free locally convex space $L(X)$ always exists and is essentially unique. The first description of the topology of the free locally convex space $L(X)$ over $X$ was obtained by Ra\u{\i}kov \cite[Theorem~1']{Rai}: {\em the topology $\pmb{\nu}_X$ of the free locally convex space $L(X)$ is the polar topology on $L(X)$ defined by the family of all equicontinuous  pointwise bounded subsets of $C(X)$}, where $C(X)$ denotes the space of all continuous functions on $X$.

Following \cite{GM}, the {\em free topological vector space} $\VV(X)$ over a Tychonoff space $X$ is a pair consisting of a topological vector space $\VV(X)$ and a continuous map $i=i_X: X\to \VV(X)$ such that every continuous map $f$ from $X$ to a topological vector space $E$ gives rise to a unique continuous linear operator ${\bar f}: \VV(X) \to E$ with $f={\bar f} \circ i$. Theorem 2.3 of \cite{GM} shows that for all Tychonoff spaces $X$, $\VV(X)$ exists, is unique up to isomorphism of topological vector spaces, is Hausdorff and the mapping $i$ is a homeomorphism of the topological  space $X$ onto its image in $\VV(X)$. Denote by $\pmb{\mu}_X$ the topology of $\VV(X)$.
So $\VV(X) =(\VV_X, \pmb{\mu}_X)$ and $L(X)=(\VV_X, \pmb{\nu}_X)$, where $\VV_X$ is a vector space with a basis $X$.

A description of the topology $\pmb{\mu}_X$ of $\VV(X)$ for a uniform space $X$ is given in Section 5 of \cite{BL}, where the authors use a complicated notion of $\w$-continuous real-valued functions on $X$. The purpose of this short note is to give a much simpler description of the topology $\pmb{\mu}_X$ of $\VV(X)$. Moreover, for the important case when $X$ is pseudocompact, we show that the topology $\pmb{\mu}_X$ can be described even easier.

Now we explain our construction.  Assume that $X$ is an arbitrary Tychonoff space. Take an arbitrary balanced neighborhood $W$ of zero in $\VV(X)$. Choose a sequence $\{ W_n\}_{n\in\NN}$ of balanced neighborhoods of zero in $\VV(X)$ such that $W_1 +W_1 \subseteq W$ and $W_{n+1}+W_{n+1} \subseteq W_n$ for all $n\in \NN$, where $\NN:=\{ 1,2,\dots\}$. For every $n\in\NN$ and each $x\in X$, choose a function $\phi_n \in \NN^X$ such that $W_n$ contains a subset of the form
\[
S_n :=\bigg\{ t x: x \in X \mbox{ and } |t| \leq \frac{1}{\phi_n(x)} \bigg\}.
\]
Then $W$ contains a subset of the form
\begin{equation} \label{equ:topology-V-L-1}
\begin{aligned}
\sum_{n\in\NN} \frac{1}{\phi_n} X & =\sum_{n\in\NN} S_n := \bigcup_{m\in\NN} \big( S_1 +\cdots +S_m\big)\\
& =\bigcup_{m\in\NN} \left\{ \sum_{n=1}^m t_n x_n: x_n \in X \mbox{ and } |t_n| \leq \frac{1}{\phi_n(x_n)}  \mbox{ for all } n\leq m \right\},
\end{aligned}
\end{equation}
and this set is balanced and absorbing. If the space $X$ is discrete,  Protasov showed in \cite{Prot} that the family $\mathcal{N}_X$ of all subsets of $\VV_X$ of the form $\sum_{n\in\NN} \frac{1}{\phi_n} X$ is a base at zero $\mathbf{0}$ for $\pmb{\mu}_X$, and the family $\mathcal{\widehat{N}}_X :=\{ \conv(V): V\in \mathcal{N}_X\}$ is a base  at $\mathbf{0}$ for $\pmb{\nu}_X$ (where $\conv(V)$ is the convex hull of $V$). If $X$ is arbitrary, observe that  every $W_n$ defines an entourage $V_n :=\{ (x,y): x-y \in W_n\}$ of the universal uniformity $\UU_X$  of the Tychonoff space $X$ considered as a uniform space. Therefore $W$  contains a subset of the form
\begin{equation} \label{equ:topology-V-L-2}
\sum_{n\in\NN} V_{n} := \bigcup_{m\in\NN} \left\{ \sum_{n=1}^m t_n (x_n-y_n): |t_n|\leq 1 \mbox{ and } (x_n,y_n)\in  V_{n} \mbox{ for all } n\leq m\right\},
\end{equation}
which is balanced.
Combining  (\ref{equ:topology-V-L-1}) and (\ref{equ:topology-V-L-2}) we obtain that every balanced neighborhood $W$ of zero in $\VV(X)$ contains a balanced and absorbing subset of the form
$
\sum_{n\in\NN} V_{n} + \sum_{n\in\NN} \frac{1}{\phi_n} X,
$
where $\{V_{n}\}_{n\in\NN}\subseteq \UU_X$ and $\{\phi_n\}_{n\in\NN}\subseteq  \NN^X$. It turns out that the converse is also true.

\begin{theorem} \label{t:topology-V(X)}
Let $X$ be a Tychonoff space. Then the family
\[
\mathcal{B}=\left\{ \sum_{n\in\NN} V_{n} + \sum_{n\in\NN} \frac{1}{\phi_n} X : \{V_{n}\}_{n\in\NN}\subseteq  \UU_X ,\; \{\phi_n\}_{n\in\NN}\subseteq  \NN^X\right\}
\]
forms a neighbourhood base at zero of $\VV(X)$, and the family
\[
\mathcal{B}_L=\{ \conv(W): W\in\mathcal{B}\},
\]
where $\conv(W)$ is the convex hull of $W$, is a base at zero of $L(X)$. Moreover, if $X$ is pseudocompact, then all functions $\phi_n$ can be chosen to be constant.
\end{theorem}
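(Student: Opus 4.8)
The plan is to verify that $\BB$ satisfies the axioms of a neighbourhood base at zero for a vector topology on $\VV_X$, then to identify that topology with $\pmb{\mu}_X$ via the universal property of $\VV(X)$; the assertion about $L(X)$ will then follow formally. The discussion preceding the statement already supplies one inclusion: every balanced and absorbent neighbourhood of zero in $\VV(X)$ contains a member of $\BB$. So it remains to prove the converse, that \emph{each} $W\in\BB$ is itself a neighbourhood of zero. The routine axioms are quick to dispatch. Balancedness holds because each atom $t_n(x_n-y_n)$ and $tx$ carries a symmetric bound on its scalar, and a Minkowski sum of balanced sets is balanced; absorbency holds because $\sum_{n}\frac{1}{\phi_n}X$ contains every finite partial sum $D_1+\cdots+D_k$ (where $D_n:=\{tx:|t|\le 1/\phi_n(x)\}$), so a sufficiently small multiple of any $v=\sum_{i=1}^{k}c_ix_i$ lies in $W$; and the filter property follows by noting that the member of $\BB$ determined by $(V_n\cap V_n',\ \max(\phi_n,\psi_n))$ is contained in the intersection of the members determined by $(V_n,\phi_n)$ and $(V_n',\psi_n)$.

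The decisive axiom is the existence, for each $W\in\BB$, of $W'\in\BB$ with $W'+W'\subseteq W$, and this is where I expect the only genuinely non-formal work. First I would pass to monotone representatives: replacing $V_n$ by $\bigcap_{k\le n}V_k$ and $\phi_n$ by $\max_{k\le n}\phi_k$ only shrinks $W$, so we may assume the atom-sets $A_n:=\{t(x-y):|t|\le 1,\ (x,y)\in V_n\}$ and $D_n$ are decreasing in $n$. I would then take $W'$ to be the member of $\BB$ determined by the even-indexed parameters $(V_{2n},\phi_{2n})$, write an element of $W'+W'$ as two interleaved finite sums, and assign the $n$-th atom of the two sums to positions $2n$ and $2n-1$. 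Monotonicity gives $A_{2n}\subseteq A_{2n-1}$ and $D_{2n}\subseteq D_{2n-1}$, so the reindexed sum lands in $\sum_n A_n+\sum_n D_n=W$; carrying this shuffling out separately for the $V$-part and the $\frac{1}{\phi}$-part and recombining by commutativity of addition yields $W'+W'\subseteq W$. Consequently $\BB$ is a neighbourhood base at zero of a vector topology $\tau_{\BB}$ on $\VV_X$.

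Next I would check that $i\colon X\to(\VV_X,\tau_{\BB})$ is continuous. For any $W\in\BB$ and any $x_0\in X$, taking $m=1$ and $t_1=1$ shows $x-x_0\in W$ whenever $(x,x_0)\in V_1$, so $\{x\in X: x-x_0\in W\}\supseteq V_1[x_0]$, which is a neighbourhood of $x_0$ because the universal uniformity $\UU_X$ induces the topology of the Tychonoff space $X$. Now $\pmb{\mu}_X$ is the finest vector topology making $i$ continuous: given any vector topology $\tau$ on $\VV_X$ for which $i$ is continuous, the universal property applied to $\mathrm{id}=\overline{i}\colon \VV(X)\to(\VV_X,\tau)$ shows $\tau\subseteq\pmb{\mu}_X$. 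Hence $\tau_{\BB}\subseteq\pmb{\mu}_X$, so every $W\in\BB$ is a $\pmb{\mu}_X$-neighbourhood of zero. Combined with the inclusion recalled in the first paragraph, this proves that $\BB$ is a neighbourhood base at zero of $\VV(X)$.

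Finally, for $L(X)$ I would argue that $\pmb{\nu}_X$ is exactly the locally convex modification of $\pmb{\mu}_X$: since a locally convex topology is in particular a vector topology making $i$ continuous we have $\pmb{\nu}_X\subseteq\pmb{\mu}_X$, and $\pmb{\nu}_X$ is the finest locally convex topology below $\pmb{\mu}_X$ (any locally convex $\sigma\subseteq\pmb{\mu}_X$ makes $i$ continuous, so $\sigma\subseteq\pmb{\nu}_X$). I would then invoke the standard fact that if $\{W\}$ is a base of balanced neighbourhoods of zero for a vector topology, then $\{\conv(W)\}$ is a base of neighbourhoods of zero for its locally convex modification; the halving and refinement axioms transfer because $\conv(W')+\conv(W')=\conv(W'+W')$ and $\conv$ is monotone. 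Applying this to the base $\BB$ of $\pmb{\mu}_X$ shows that $\BB_L=\{\conv(W):W\in\BB\}$ is a base at zero of $L(X)$, completing the proof.
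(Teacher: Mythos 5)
Your proposal is correct and follows essentially the same route as the paper's proof: the same even/odd interleaving yields $W'+W'\subseteq W$, the same continuity-of-$i$-plus-universal-property argument gives $\tau_{\BB}\leq \pmb{\mu}_X$, and the reverse inclusion you delegate to the discussion preceding the theorem is exactly what the paper's Step 2 re-proves with its explicit chain of neighborhoods $U_k$. The only real difference is cosmetic: where you derive from the two universal properties that $\pmb{\nu}_X$ is the locally convex modification of $\pmb{\mu}_X$ and then pass to convex hulls, the paper obtains the statement about $L(X)$ by citing Proposition 5.1 of \cite{GM}.
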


\begin{proof}
We prove the theorem in two steps.
\smallskip

{\em Step 1. The family $\mathcal{B}$ is a base of some vector topology $\TTT$ on $\VV_X$.} Indeed, it is clear that the family $\mathcal{B}$ is a filterbase, and, by construction, each set $W\in\mathcal{B}$ is balanced and absorbent. So, by Theorem 4.5.1 of \cite{NaB}, we have to check only that for every $W=\sum_{n\in\NN} V_{n} + \sum_{n\in\NN} \frac{1}{\phi_n} X\in\mathcal{B}$, there is a $W'=\sum_{n\in\NN} V'_{n} + \sum_{n\in\NN} \frac{1}{\phi'_n} X\in\mathcal{B}$ such that $W' +W' \subseteq W$. For every $n\in\NN$, choose $V'_n\in \UU_X$  and $\phi'_n\in\NN^X$ such that $V'_{n} \subseteq V_{2n-1} \cap V_{2n}$ and  $\phi'_n \geq\max\{\phi_{2n-1}, \phi_{2n}\}$. Then for every $m\in\NN$, we obtain the following: if  $|t_n|,|s_n|\leq 1 $ and $(x_n,y_n), (u_n,v_n)\in  V'_{n}$, then
\[
\begin{split}
\sum_{n=1}^m t_n (x_n-y_n) & + \sum_{n=1}^m s_n (u_n-v_n)  = t_1 (x_1-y_1) + s_1 (u_1-v_1)+\cdots + t_m (x_m-y_m) +s_m (u_m-v_m)\\
& \in \left\{ \sum_{n=1}^{2m} t_n (x_n-y_n): |t_n|\leq 1 \mbox{ and } (x_n,y_n)\in  V_{n} \mbox{ for all } n\leq 2m\right\},
\end{split}
\]
and if $|t_n|, |s_n| \leq \frac{1}{\phi'_n(x)}$ and $x_n,y_n\in X$, then
\[
\begin{split}
\sum_{n=1}^m t_n x_{n}  + \sum_{n=1}^m s_n y_{n}  & = t_1 x_{1} + s_1 y_{1} +\cdots + t_m x_{m} +s_m y_{m}\\
& \in \left\{ \sum_{n=1}^{2m} t_n x_{n}: x_{n} \in X \mbox{ and } |t_n| \leq \frac{1}{\phi_n(x_n)}  \mbox{ for all } n\leq 2m\right\}.
\end{split}
\]
It is clear that the obtained two inclusions  imply $W' + W' \subseteq W$.
\smallskip

{\em Step 2. We show that $\TTT=\pmb{\mu}_X$.} Indeed, if $x\in X$ and $W=\sum_{n\in\NN} V_{n} + \sum_{n\in\NN} \frac{1}{\phi_n} X\in \mathcal{B}$, then $x+W$ contains the neighborhood $V_{1}[x]:=\{ y\in X: (x,y)\in V_{1}\}$ of $x$ in $X$. Hence the identity map $\delta: X\to (\VV_X,\TTT), \delta(x):=x,$ is continuous. Therefore $\TTT\leq \pmb{\mu}_X$ by the definition of $\pmb{\mu}_X$. It remains to show that $\TTT\geq \pmb{\mu}_X$.

Given any balanced neighborhood $U$ of zero in $\pmb{\mu}_X$, choose balanced neighborhoods $U_0,U_1,\dots$ of zero in $\pmb{\mu}_X$ such that
\[
U_0 +U_0\subseteq U \;\; \mbox{ and } \;\; U_k + U_k + U_k \subseteq U_{k-1} \; (k\in\NN).
\]
Since $\UU_X$ is the universal uniformity and, by Theorem 2.3 of \cite{GM}, $X$ is a subspace of $\VV(X)$, for every $n\in\NN$, we can choose $V_n\in\UU_X$ such that $x-y\in U_n$ for every $(x,y)\in V_{n}$. As all $U_n$ are balanced  and absorbent, for every $n\in\NN$ and each $x\in X$, we can  choose $\lambda(n,x)>0$ such that
\[
\lambda(n,x) x \subseteq U_n.
\]
For every $n\in\NN$, set $\phi_n (x):= \big\lfloor \tfrac{1}{\lambda(n,x)}\big\rfloor +1$. Then $\phi_n \in\NN^X$ for every $n\in\NN$.

Moreover, if $X$ is pseudocompact, then $X$ is a bounded subset of $\VV(X)$ (recall that any even precompact subset of a topological vector space is bounded, see Theorem 6.1.2 of \cite{NaB}). Therefore, there is $s(n)\in\NN$ such that $X\subseteq s(n) U_n$, and hence $\tfrac{1}{s(n)} X \subseteq U_n$. For every $n\in\NN$, set $\phi_n (x):= s(n)$.

Therefore, for every $m\in \NN$, we obtained the following: if $|t_n|\leq 1 \mbox{ and } (x_n,y_n)\in  V_{n} \mbox{ for all } n\leq m$, then
\[
\sum_{n=1}^m t_n (x_n-y_n) \in  U_1 + \cdots  +  U_m \subseteq U_0,
\]
and if $ |t_n| \leq \frac{1}{\phi_n(x_n)}$ for $n=1,\dots,m$, then
\[
\sum_{n=1}^m t_n x_{n} \in U_1 +\cdots + U_m \subseteq U_0.
\]
Therefore $\sum_{n\in\NN} V_{n} + \sum_{n\in\NN} \frac{1}{\phi_n} X\subseteq U_0 +U_0 \subseteq U$. Thus $\TTT \geq \pmb{\mu}_X$, and hence $\TTT=\pmb{\mu}_X$.

Finally, the definition of the topology $\pmb{\nu}_X$ of $L(X)$ and Proposition 5.1 of \cite{GM} imply that the family $\mathcal{B}_L$ is a base at zero of $\pmb{\nu}_X$.\qed
\end{proof}

\end{document}